 \newtheorem{thm}{Theorem}[section]
  \newtheorem{theorem}{Theorem}[section]
 \newtheorem{cor}[thm]{Corollary}
  \newtheorem{lemma}[thm]{Lemma}
 \newtheorem{prop}[thm]{Proposition}
 \theoremstyle{definition}
 \theoremstyle{remark}
\theoremstyle{remark} 
\newtheorem{remark}{Remark} 
\def\cHm1{\mathcal{H}^{-1}(\Omega)}
\def\cL2{\mathcal{L}^2(\Omega)}
\newcommand{\R}{\mathbb{R}}
\def\cHd12{\mathcal{H}^{\frac{1}{2}}(\partial\Omega)}
\def\cLd2{\mathcal{L}^{2}(\partial\Omega)}
\begin{document}
\title[Time-scale analysis non-local diffusion systems]{Time-scale analysis non-local diffusion systems, applied to disease models}
\author{M. C. Pereira$^1$, S. Oliva$^2$, L. M. Sartori$^3$}
\address{$^1,^2,^3$ Dept. Mat. Aplicada, IME, USP, Rua do Mat\~{a}o, 1.010, CEP 05508-900, S\~{a}o Paulo, SP, Brazil}

\email{marcone@ime.usp.br} \email{smo@ime.usp.br} \email{larissa@ime.usp.br}

\keywords{reaction-diffusion equations, nonlocal systems, Neumann problem, epidemiology models.\\
\indent 2010 {\it Mathematics Subject Classification.} Primary 35K57; Secondary 92B05.}

\begin{abstract}
The objective of the present paper is to use the well known Ross-Macdonald models as a prototype, incorporating spatial movements, identifying different times scales and proving a singular perturbation result using a system of local and non-local diffusion. This results can be applied to the prototype model, where the vector has a fast dynamics, local in space, and the host has a slow dynamics, non-local in space.
\end{abstract}

\maketitle

\section{Introduction}\label{sec:Intro}

In this work we are interested in the dynamics of vector-borne diseases. These  dynamics, due to the interaction between hosts and vectors, behave quite different from direct diseases. In fact, they behave, as we will show, more likely to direct diseases with nonlinear incidence rates (see \cite{Ruan}). It has been a challenge for scientists and public health officers to predict outbreaks of such diseases and, in some countries, diseases like dengue and malaria are a leading cause of serious illness and death among children. Another point is that such illness, due to globalization, are spreading all over the world. For instance,  dengue is currently the human viral disease with the highest number of cases, being an arbovirus of the family \textit{Flaviviridae},
genus \textit{Flavivirus}, is transmitted through the bite of female mosquitoes of the genus \textit{Aedes} infected with the virus,
which are also responsible for the transmission of Zika, Chikungunya and Yellow Fever virus 
\cite{wang2017}.

Dengue is estimated to be endemic in more than 100 countries, where climate favors the proliferation of vectors,
and  approximately half of the world's population is at risk of contracting the disease
\cite{liu2016climate, kraemer2015global, shepard2016global, rodriguez2011re}.
Due to lack of vaccination, basic sanitation, climate changes, and with increasing human mobility, such diseases
 are spreading and appearing in new regions. The host population can become infected in environments that are not their places of residence, and since mosquitoes do not travel great distances, humans can carry the disease to different locations where there are susceptible mosquitoes, and this may also lead to increased population heterogeneity and consequently in changes of the disease dynamics \cite{massad2008scale, amaku2016magnitude, dosSantos2018}.
 Thus, it is of interested that we have reliable models that can predict the spread of outbreaks trying to incorporate space heterogeneity, human and vector dynamics.

For dengue, the current control measures are the control of the vector population and its breeding sites, with the
use of insecticides, adulticides and population awareness campaigns. More recently, it also been  successufully tested control measures with \textit{Wolbachia} bacteria, which prevents the vector from transmitting
 \cite{boccia2014, JKing2018}. Some vaccines have been
tested and others are in the testing phase, but the great difficulty is that such vaccines should be tetravalent, that is,
be effective against the 4 existing serotypes of the disease \cite{precioso2015clinical, maier2017analysis}.

Mathematical models were improved to analyze the dynamics of dengue propagation and to evaluate the best strategies
of control. Studies with spatial networks, or meta-populations,  provide a way to understand the interactions between individuals in different
scales, being a powerful tool to understand the characteristics of transmission in communities, regions and countries
incorporating spatial heterogeneity \cite{massad2008scale, amaku2014, iggidr2017vector}.
In addition, as in  modelling the dynamics of several vector borne diseases, if the goal is to fit the model to real data,  one has to deal with  the asymptomatic cases,
reliable data, in particular for the mosquitoes population, besides having to take into account the different  times scale of  vectors and hosts, which makes it difficult to study and understand the dynamics of the disease \cite{rocha2013time, massadestimating}.

Our approach will be deterministic, we will not take into account stochastic effects or incorporate the element of chance in the models. The prototype model here is continuous in the space domain, but a lot of work has been done considering discrete networks in space, which will provide system of ordinary differential equations. There are advantages and disadvantages to both approaches. From the mathematical point of view, there are several theoretical challenges in the continuous model.

One of the main questions that public health officers, and thus modelling in epidemiology,  concern the  global stability of equilibria,
since this characterizes if a disease will become endemic. This will, from the epidemiology side, be characterized by the basic reproduction number, $R_0$, being larger or smaller than one. On the other hand, from the mathematical point of view, this is characterized by the existence of a stable equilibrium point, with positive number (or density) of infected individuals. This characterization is crucial for predicting outbreaks. In practice, once we propose a model to predict the outbreak, one has to fit the parameters for a specific disease, this is a real challenge when  vectors are involved since is very difficult to estimate its population, a crucial parameter in most problems \cite{massadestimating}. 

We will propose a host-vector disease compartmental model that will include space and time dynamics, thus capturing the heterogeneity of hosts and vectors in space. This is quite a challenge since hosts dynamics, in medium scales, are very difficult to model. The purpose is to follow the ideas of multiscale dynamics, in order to simplify the system modelling the vectors and hosts dynamics. We will consider that vector population dynamics is much faster than the hosts dynamics. This model can be applied, for instance, for dengue (see \cite{marconelarissaoliva}). 

There are several results dealing with singular perturbation using several different diffusion operators, one can refer to \cite{magal,Henry,hale}. But here we will couple a non-local operator to a local one. We will consider a vector-borne disease modeled by the  Ross-Macdonald model incorporating spatial movements, both for host and vector population. We consider that hosts can move non-locally and vectors can move locally, this will lead us to the following prototype that mix different infinite dimensional operators in the same system.

\section{Setting the model}

Being  more precise, the model we employed to describe the dynamics of the disease transmission considers that the 
total host population ($N_{h}$) is divided in susceptible ($S$) and infected ($I$)
 and it is coupled with the compartments of susceptible $S_{m}$ and infected $I_{m}$ vectors 
with total population given by $N_{m}$, the model is named $SIS_{m}I_{m}$. Thus, we describe the 
interaction dynamics between the compartments through a system of ordinary differential equations (ODEs):
\begin{eqnarray*}
\begin{array}{lllll}
dS/dt = \mu_{h}(N_{h}-S) - \beta SI_{m}/N_{m} \\
dI/dt = \beta SI_{m}/N_{m} - (\gamma + \mu_{h})I \\
dS_{m}/dt = \mu_{m}(N_{m}-S_{m}) - \omega S_{m}I/N_{h} \\
dI_{m}/dt = \omega S_{m} I/N_{h}  - \mu_{m}I_{m}
\end{array}
\label{eq:SIRSmIm}
\end{eqnarray*}
where $\mu_{h}$ is the birth/mortality rate of hosts, $\beta$ and $\omega$ are the transmission 
rates from vectors to hosts and from hosts to vectors, respectively, $\gamma$ is the recovery 
rate of hosts and $\mu_{m}$ is the vector birth/mortality rate. 

Assuming that birth and mortality rates are equal, we have that populations remain constant 
over time, that is, $N_{h}(t) = S(t) + I(t)$ and $N_{m}(t) = S_{m}(t) + I_{m}(t)$, 
consequently we can easily obtain $S(t) = N_{h}(t) - I(t)$ and $S_{m}(t) = N_{m}(t) - I_{m}(t)$ 
and then work with an equivalent reduced system:
\begin{eqnarray*}
dI/dt &=& \beta (N_{h}-I)I_{m}/N_{m} - (\gamma + \mu_{h})I \\
dI_{m}/dt &=& \omega (N_{m}-I_{m}) I/N_{h}  - \mu_{m}I_{m}
\end{eqnarray*}

We also consider that the hosts and vectors dynamics are in different scales given by the order of the birth/mortality and transmission rates.
Hence, to describe this time scales separation, we add the singular term $1/\varepsilon$ which leads us to the following system
\begin{eqnarray*} \label{s1epsilon}
dI/dt &=& \beta (N_h-I) I_{m}/N_{m} - (\gamma + \mu_{h})I \\
dI_{m}/dt &=& \frac{1}{\varepsilon} \left(\overline{\omega} (N_{m}-I_{m}) I/N_{h} - \overline{\mu_{m}}I_{m} \right).
\end{eqnarray*} 

Moreover, letting $i=I/N_h$, $j=I_m/N_m$, $\alpha_h=\beta$, $\beta_h=\gamma+\mu_h$, $\alpha_v=\overline{\omega}$, $\beta_v=\overline{\mu_m}$ and  rewriting the parameters, we get
\begin{eqnarray}
\label{odesis}
\begin{array}{lll}
\displaystyle di/dt &=& \displaystyle\alpha_h(1-i)j-\beta_hi  \\
\displaystyle dj/dt &=& \displaystyle \frac{\alpha_v}{\varepsilon} (1-j) i - \frac{\beta_v}{\varepsilon} j.
\end{array}
\end{eqnarray} 
In this way, see for instance \cite{rocha2013time}, we set a system where the vector population dynamics is much faster than the hosts one as $\varepsilon \approx 0$.

As for the spatial mobility, we will consider a regular bounded space domain $\Omega \subset \R^n$ with exterior unit normal $\vec{n}$. The spatial movement for the vector will be modeled by the usual Laplacian operator with Neumann boundary condition ($\Delta$) and the hosts spatial dynamics will be modeled by an non-local operator  $K_J$ defined as follows
\[
\displaystyle K_J\:i(x)=\int_\Omega J(x-y)(i(y)-i(x))dy, \quad x \in \Omega.
\]
Along whole paper we assume that the kernel $J$ satisfies the hypotheses 
$$
{\bf (H_J)} \qquad 
\begin{array}{c}
J \in \mathcal{C}(\R^n,\R) \textrm{ is non-negative with } J(0)>0, \\ J(-x) = J(x) \textrm{ for every $x \in \R^n$ and } \\
\int_{\R^n} J(x) \, dx = 1.
\end{array}
$$

Under these conditions, the $K_J$ is known as a nonlocal operator with non-singular kernel and Neumann condition \cite{libro}. 
Putting the local disease dynamics (\ref{odesis}) with the spatial dynamics, we get our main model with Neumann boundary condition and $d_1,d_2>0$,
\begin{equation} \label{SIRUV}
\displaystyle\left\{
\begin{array}{l}
\displaystyle \frac{\partial i}{\partial t}=\alpha_h(1-i)j-\beta_hi+d_1K_J\: i,\\
\\
\displaystyle\frac{\partial j}{\partial t}=\frac{\alpha_v}{\varepsilon}(1-j)i-\frac{\beta_v}{\varepsilon}j+d_2\Delta j,\\
\end{array}
\right. \quad x \in \Omega, \; t>0 
\end{equation}
\begin{equation}\label{Neuman}
\frac{\partial j}{\partial {\vec{n}}}=0, \quad x \in \partial \Omega.
\end{equation}

The paper is organized as follows. In Section \ref{sec-asym}, we use asymptotic expansion approach to find a limit equation to \eqref{SIRUV}. Indeed, we obtain a limit model which represents the original system in an effective way as $\varepsilon$ goes to zero. 
We also discuss some properties to the limit equation, such as, conditions to guarantee the existence of a positive stationary solution globally stable. 
Next, in Section \ref{sec-conv}, we consider a more general system, which includes our prototype model \eqref{SIRUV}, showing convergence at $\varepsilon = 0$. Assuming appropriated assumptions, we prove convergence in $L^2(\Omega)$ spaces in finite intervals of time. Finally, we make some comments about the dynamics of the prototype model as a consequence of our estimates in Section \ref{appl}.

\vspace{3mm}

\section{Asymptotic Expansion} 
\label{sec-asym}

In this section we use power series expansion to analyze in a formal way the asymptotic behavior of the  singular perturbed system (\ref{SIRUV}) with respect to parameter $\varepsilon>0$. We assume functions $i$ and $j$ satisfy  
\[\displaystyle i=i_0+\varepsilon i_1+\dots
\quad \textrm{ and } \quad 
\displaystyle j=j_0+\varepsilon j_1+\ldots\]
Thus, the time derivatives themselves set 
$$
\begin{gathered}
\frac{di}{dt} =\frac{d i_0}{dt} + \varepsilon \frac{di_1}{dt} + \dots \quad \textrm{ and } \quad 
\frac{dj}{dt} = \frac{dj_0}{dt} + \varepsilon \frac{dj_1}{dt} + \ldots
\end{gathered}  
$$
which gives us from the right-hand side of \eqref{SIRUV} that 
\begin{eqnarray*}
\frac{di}{dt} & = & \left[    \alpha_h(1 - i_0)j_0 - \beta_h i_0 + d_1 K_J i_0  \right] \\
& & \qquad + \varepsilon \left[   \alpha_h(j_1 - i_0 j_1 - i_1 j_0 )  - \beta_h i_1 + d_1 K_J i_1  \right] + O(\varepsilon^2) \\
\frac{dj}{dt} & = & \left[    \alpha_v(1 - j_0)i_0 - \beta_v j_0  \right] \\
& & \qquad  + \varepsilon \left[   \alpha_v(i_1 - j_0 i_1 - j_1 i_0 )  - \beta_v i_1 + d_2 \Delta j_0  \right] + O(\varepsilon^2)
\end{eqnarray*}

Hence, if we plug these expressions in the system \eqref{SIRUV}, we get at $\varepsilon=0$ 

$$
\displaystyle\left\{
\begin{array}{rl}
\displaystyle \frac{\partial i_0}{\partial t}&=\alpha_h(1-i_0)j_0-\beta_hi_0+d_1K_J \:i_0,\\
\\
\displaystyle 0&=\alpha_v(1-j_0)i_0 - \beta_v j_0.\\
\end{array}
\right.
$$
Consequently, we obtain  
\[\displaystyle
j_0=m(i_0)=\frac{\alpha_vi_0}{\alpha_vi_0+\beta_v}
\]
and then, we deduce the reduced equation 
\begin{equation} \label{limin}
\displaystyle \frac{\partial i_0}{\partial t}=\alpha_h(1-i_0)m(i_0)-\beta_hi_0+d_1K_J\: i_0
\end{equation}
with initial condition $i_0(t,x) = i_0(0,x)$.

It will be seen in Section \ref{sec-conv} that the solutions $i$ of \eqref{SIRUV} can be approximated to the functions $i_0$ given by \eqref{limin}. 
Indeed, at $\varepsilon = 0$ we have
\[\displaystyle
i    \approx  i_0
\]
under appropriated functional spaces, initial conditions and finite time.

\subsection{The limit problem}

Let us now discuss a little the limit equation \eqref{limin}.
First, we notice that \cite[Theorem 3.2]{HMMV} implies that the Strong Maximum Principle works to \eqref{limin} in the space of non-negative continuous functions in $\bar \Omega$ which we denote here by $\mathcal{C}(\Omega)$.
Hence, since $K_J$ is zero in any constant function, we have that the nonlocal equation \eqref{limin} defines a dynamical system which behaves as the solutions of the ordinary differential equation 
\begin{equation} \label{odec}
\frac{d z}{d t}=\alpha_h(1-z)m(z)-\beta_h z.
\end{equation}

Indeed, problem \eqref{limin} possesses two constant equilibria, the null constant and 
$$
i^*_0=\frac{\alpha_h \alpha_v -\beta_h \beta_v}{\alpha_h+\alpha_v \beta_h}.
$$
Thus, under the additional condition 
$$
{\bf (H_C)} \qquad \qquad 
\alpha_h \alpha_v >\beta_h \beta_v 
$$
we conclude that the null function is an unstable equilibrium to \eqref{limin} and $i^*_0$ is globally stable. In fact, we have the following result:

\begin{prop}
Let us assume condition ${\bf (H_J)}$.
\begin{enumerate}
\item[a)] Suppose $i_0(0,x)$ continuous and non-negative. Then \eqref{limin} possesses a continuous, non-negative solution for all $t>0$.
\item[b)] Assume still condition ${\bf (H_C)}$. Then, the positive constant $i^*_0$ is the unique stationary and positive solution to \eqref{limin} which is globally stable in $L^\infty(\Omega)$ for any solution with non-trivial and non-negative initial condition in $\mathcal{C}(\Omega)$.
\end{enumerate}
\end{prop}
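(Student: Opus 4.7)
The plan is to regard (\ref{limin}) as a semilinear evolution equation in the Banach space $\mathcal{C}(\Omega)$ of continuous functions on $\bar\Omega$ equipped with the sup-norm, exploit that $K_J$ is a bounded linear endomorphism of this space, and then combine standard Picard theory with the Strong Maximum Principle cited from [HMMV] and a comparison argument against the scalar ODE (\ref{odec}).

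For part (a), I would write the right-hand side of (\ref{limin}) as $L(u)+F(u)$, where $L=d_1 K_J$ is bounded linear on $\mathcal{C}(\Omega)$ and $F(u)(x)=\alpha_h(1-u(x))m(u(x))-\beta_h u(x)$ is a locally Lipschitz Nemitski operator there. A standard contraction argument in $C([0,T];\mathcal{C}(\Omega))$ produces a unique local solution. To extend globally I would show that the rectangle $[0,M]$ is forward-invariant for any $M\geq 1$: non-negativity follows from the Strong Maximum Principle (at a first time where a non-negative solution would touch zero at some point $x_0$, one has $F(0)=0$ and $K_J i_0(x_0)\geq 0$, contradicting a downward crossing), while the upper bound comes from noting that any constant $M\geq 1$ satisfies $F(M)\leq 0$ and $K_J M\equiv 0$, so $M$ is a supersolution. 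The resulting $L^\infty$-bound turns local existence into global existence.

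For part (b) I split the argument into uniqueness of the positive equilibrium and global attractivity. Let $\phi\in\mathcal{C}(\Omega)$ be a non-trivial, non-negative stationary solution, so $d_1 K_J\phi = -g(\phi)$ with $g(z):=\alpha_h(1-z)m(z)-\beta_h z$. Evaluating at a maximizer $x_M$ of $\phi$ gives $K_J\phi(x_M)\leq 0$, hence $g(\phi(x_M))\geq 0$; under $({\bf H_C})$ the only non-negative zeros of $g$ are $0$ and $i^*_0$, with $g>0$ on $(0,i^*_0)$ and $g<0$ on $(i^*_0,\infty)$ (a direct calculation, since $g(z)=0$ for $z\neq 0$ reduces to a linear equation). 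Therefore $\phi(x_M)\leq i^*_0$; the analogous argument at a minimizer yields $\phi(x_m)\geq i^*_0$, so $\phi\equiv i^*_0$. For global attractivity, let $z_-(t)$ and $z_+(t)$ solve (\ref{odec}) with initial data $\inf_\Omega i_0(0,\cdot)$ and $\sup_\Omega i_0(0,\cdot)$ respectively (the infimum being strictly positive for any $t>0$ by the Strong Maximum Principle applied to the non-trivial initial datum). Since $K_J$ annihilates constants, $z_\pm$ are themselves spatially constant solutions of (\ref{limin}), and the comparison principle for the nonlocal equation sandwiches $z_-(t)\leq i_0(t,x)\leq z_+(t)$. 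The phase-line analysis of (\ref{odec}) on $[0,\infty)$, using the sign information on $g$ above, gives $z_\pm(t)\to i^*_0$ monotonically, yielding $\|i_0(t,\cdot)-i^*_0\|_{L^\infty(\Omega)}\to 0$.

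The main obstacle I expect is a clean statement of the comparison principle for the nonlocal reaction equation (\ref{limin}), i.e.\ showing that $u(0,\cdot)\leq v(0,\cdot)$ implies $u(t,\cdot)\leq v(t,\cdot)$ for all later times when both stay in the invariant rectangle. The proof reduces via Duhamel to a Gronwall estimate on $w=v-u$, where the reaction contributes a one-sided Lipschitz term thanks to smoothness of $F$ on the invariant rectangle and the nonlocal piece $K_J$ preserves the sign at extremal points thanks to [HMMV, Theorem 3.2]. Everything downstream, including the reduction to the one-dimensional ODE, is then elementary.
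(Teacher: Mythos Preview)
Your proposal is correct and follows essentially the same route as the paper: global existence via Picard theory on $\mathcal{C}(\Omega)$ (where $K_J$ is bounded) together with maximum-principle invariance, uniqueness of the positive equilibrium via evaluation at extremal points, and global attractivity by sandwiching with spatially constant ODE sub/supersolutions---the paper's proof is in fact just a terse pointer to [HMMV, Theorems 3.2 and 3.6], [Fife], [Pao] and then the sentence ``the convergence in $L^\infty(\Omega)$ follows from comparison with the solutions of the ODE \eqref{odec}'', which is exactly your argument. One small imprecision: you take $z_-(0)=\inf_\Omega i_0(0,\cdot)$, which may vanish for a merely non-trivial non-negative datum, so the clean fix is to start the lower comparison at any $t_0>0$ where the Strong Maximum Principle already gives $\inf_\Omega i_0(t_0,\cdot)>0$; you essentially say this in your parenthetical but the phrasing blurs $t=0$ with $t>0$.
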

\begin{proof}
The existence and uniqueness of the globally stable equilibrium is a consequence of Maximum Principle arguments shown at \cite[Theorem 3.2]{HMMV} being minor amendments of the proofs of the classical theory discussed in \cite{Fife, Pao}. See also \cite[Theorem 3.6]{HMMV} and \cite[Exercise 8]{Henry}. On the other hand, the convergence in $L^\infty(\Omega)$ of the solutions follows from comparison with the solutions of the ODE \eqref{odec}.
\end{proof}

\begin{remark} \label{0stable}
It is not difficult to see that, if $\alpha_h \alpha_v \leq \beta_h \beta_v$, then the null function is the unique stationary and non-negative solution of \eqref{limin} which is globally stable for any solution with non-negative initial condition.
\end{remark}

\section{Convergence Results}  \label{sec-conv}

In this section, we estimate the convergence of the solutions in a more general framework. We analyze the following singularly perturbed system 
\begin{equation} \label{equa}
\left\{
\begin{gathered}
\dot x = f(x,y) + K_Jx \\
\varepsilon \dot y = g(x,y) + \varepsilon \Delta y \\
\end{gathered}
\right. \quad \textrm{ in } \Omega, \quad \varepsilon >0, 
\end{equation}
with homogeneous Neumann boundary condition
\begin{equation}  \label{bcond}
\frac{\partial y}{\partial N} = 0 \quad \textrm{ on } \partial \Omega.
\end{equation}

As before, we suppose $\Omega \subset \R^n$ is a regular bounded domain, $\Delta$ is the Laplacian differential operator and $K_J$ is the nonlocal one
$$
K_J x(u) = \int_\Omega J(u-v) ( x(v) - x(u) ) dv, \quad u \in \Omega.
$$

The nonlinearities $f$ and $g:\R^2 \mapsto \R$ are smooth functions and will include the class of those ones discussed in the previous sections.

We show that in the limit $\varepsilon \to 0$ the slow component $x(t)$ converges to a function $X(t)$ which is governed by the effective equation 
\begin{equation} \label{limit}
\dot X = f(X,m(X)) + K_JX, \quad \textrm{ in } \Omega,
\end{equation}
where $y=m(x)$ is the graph representation of a set given by 
\begin{equation} \label{funcm}
g(x,m(x))=0.
\end{equation}

Under $f$ and $g$ we set the following conditions:

${\bf(H_{fg})}$ There exist positive constants $M$, $N$, $\gamma$, $\alpha$, $\beta$ and $\delta$ such that 
for all $x$ and $y$ satisfying $0 \leq x \leq N$ and $0 \leq y \leq M$, we have 
\begin{enumerate}
\item[(i)] $|f(x,y)|$ and $|g(x,y)|$ uniformly bounded for a constant $k>0$;
\item[(ii)] 
$$
\frac{\partial f}{\partial x}(x,y) \leq - \alpha 
\quad  \textrm{ and }  \quad  0 \leq \frac{\partial f}{\partial y}(x,y) \leq \beta N;
$$
\item[(iii)] 
$$
\frac{\partial g}{\partial y}(x,y) \leq - \delta
\quad \textrm{ and } \quad 0 \leq \frac{\partial g}{\partial x}(x,y) \leq \gamma M.
$$
\end{enumerate}

${\bf(H_{\infty})}$ Next, we assume the nonlinearities $f$ and $g$ are such that :
\begin{enumerate}
\item[(i)] there exists a constant $\hat \rho \in \R$ in such way that $(f(x,y), g(x,y)) + \hat \rho \, (x,y)$ is an increasing function;
\item[(ii)] $( f(x,y), \epsilon^{-1} g(x,y) ) \cdot (x,y)<0$ wherever $(x,y)$ does not belong to the rectangle 
$$\mathcal{R} = \{ (u,v) \in \R^2 \, : \, 0\leq u \leq N \textrm{ and } 0 \leq y \leq M \}.$$
\end{enumerate}

Finally, we suppose function $m$ given by \eqref{funcm} satisfies 
$$
{\bf (H_m)} \qquad 
\begin{gathered}
0 \leq m(x) \leq \frac{\gamma}{\delta}M N 
\quad \textrm{ and } \quad 
0 \leq m'(x) \leq \frac{\gamma M}{\delta} 
\end{gathered}
$$
whenever $0 \leq x \leq N$.

\begin{remark} \label{Linfty}
Due to condition ${\bf (H_\infty)}$, it follows from Maximum Principle arguments, discussed for instance in \cite[Lemma 3.11]{libro} and \cite[Exercise 8]{Henry}, the global existence and uniformly boundedness of solutions to \eqref{equa} in $L^\infty(\Omega)$ for all $\epsilon>0$ wherever the initial condition $(\|x_0\|_{L^\infty}, \|y_0\|_{L^\infty}) \in \mathcal{R}$. Indeed, we can use comparison to show that problem \eqref{equa} defines a dynamical system as the ODE
 \begin{equation*} \label{eq}
\left\{
\begin{gathered}
\dot x = f(x,y) \\
\dot y = \epsilon^{-1} g(x,y) 
\end{gathered}
\right.
\end{equation*}
since their solutions also satisfy the boundary condition \eqref{bcond}.
\end{remark}

\begin{remark}
We notice that nonlinearities $f(x,y)=\beta (N-x)y- \alpha x$ and $g(x,y)=\gamma (M-y)x - \delta y$ with $M$, $N$, $\alpha$, $\beta$, $\gamma$ and $\delta$ positive constants satisfy conditions ${\bf (H_{fg})}$, ${\bf (H_\infty)}$ and ${\bf (H_m)}$. 
In this way, \eqref{equa} can be seen as a generalization of problem \eqref{SIRUV}.
\end{remark}

\subsection*{Estimates and Convergence}

Let $(x,y)$ be the solution of \eqref{eq}, and let us introduce the deviation of the fast variable from the invariant set  
$$
y=m(x)+\eta.
$$ 
First, we estimate the rate at which $\eta$ goes to zero.
Since 
$$
\dot \eta = \dot y -m'(x) \dot x
$$
we can rewrite system \eqref{eq} as 
\begin{equation}    \label{siseta}
\left\{
\begin{array}{lll}
\displaystyle \dot x &=& f(x,m(x) + \eta) + K_Jx \\
\displaystyle \dot \eta &=& {\varepsilon}^{-1} g(x, m(x)+\eta) + \Delta( m(x) + \eta) \\
\displaystyle && \qquad \qquad - m'(x) \left(  f(x,m(x) + \eta) + K_Jx  \right)  \\
\displaystyle \dot y &=& {\varepsilon}^{-1} g(x,m(x) + \eta) + \Delta( m(x) + \eta) 
\end{array}
\right. .
\end{equation}
Thus
\begin{eqnarray*}
&& \frac{d}{dt} \left( \frac{1}{2} \int_\Omega \eta^2 \, du\right) =  \int_\Omega \eta \, \dot \eta \, du \\    
&=& \int_\Omega \frac{\eta}{\varepsilon} \left( g(x,m(x)+\eta) - g(x,m(x)) \right)  du \\
& &  + \int_\Omega \eta \Delta(m(x) +\eta) \, du - \int_\Omega \eta m'(x) \left(  f(x,m(x)+\eta) + K_Jx  \right) du \\
&=& I_1 + I_2 - I_3.
\end{eqnarray*}
Now, let us evaluate each one of the integrals $I_i$ for $i=1, 2, 3$.

Since $g$ is smooth and satisfies (iii) at ${\bf (H_{fg})}$, 
there exists $b=b(u)$ such that
$$
I_1 = \int_\Omega \frac{\eta^2}{\varepsilon} \frac{\partial g}{\partial y}(x,b) \, du
\leq - \frac{\delta}{\varepsilon}  \int_\Omega \eta^2 \, du
\leq - \frac{\delta}{\varepsilon} \| \eta \|_{L^2(\Omega)}^2.
$$

Next, integrating by parts, and using boundary condition \eqref{bcond} and hypotheses ${\bf (H_{m})}$, we obtain by Young's inequality  that 
\begin{eqnarray*}
I_2 &=& \int_\Omega \eta \Delta( m(x) + \eta ) \, du \\
&=& - \int_\Omega |\nabla \eta|^2 \, du - \int_\Omega \nabla \eta \cdot \nabla(m(x)) \, du \\
&\leq& - \int_\Omega |\nabla \eta|^2 \left(  1- \frac{\xi^2}{2}  \right) du + \frac{1}{2 \xi^2} \int_\Omega |\nabla(m(x))|^2 \, du \\
&\leq& \frac{1}{2 \xi^2} \int_\Omega |m'(x)|^2 \, | \nabla x |^2 du \\
&\leq&  \frac{1}{2} \left(\frac{\gamma M}{\xi \delta}\right)^2 \| \nabla x \|_{L^2(\Omega)}^2,
\end{eqnarray*}
if $\xi > \sqrt{2}$.

Also, since $f$ is a smooth function and satisfies (ii) at ${\bf (H_{fg})}$, we have that   
there exists $b$ with $m(x) \leq b \leq m(x) + \eta$, without lost of generality, such that 
\begin{eqnarray*}
I_3 &=& \int_\Omega \eta m'(x) \left[ f(x,m(x) + \eta) - f(x,m(x)) + f(x,m(x)) + K_Jx \right]   du \\
&=& \int_\Omega \eta m'(x) \left[ \frac{\partial f}{\partial y}(x,b)  \eta + f(x,m(x)) + K_Jx \right] du \\
&\geq& \int_\Omega \eta m'(x) \left[ f(x,m(x)) + K_Jx \right] du.
\end{eqnarray*}
Hence, since $|f(x,y)| \leq K$ for all $0\leq x \leq N$ and $0 \leq y \leq M$, and 
\begin{eqnarray*}
&&\left| \int_\Omega y \, K_Jx \, du \right|  =  \left|  \int_\Omega y(u) \int_\Omega J(u-v) ( x(v) - x(u) ) \, dv \, du \right|   \\
& \leq & \left| \int_\Omega y(u) \int_\Omega J(u-v) \, x(v) \, dv \, du \right| + \left| \int_\Omega y(u) \, x(u) \int_\Omega J(u-v) \, dv \, du \right| \\
& \leq & \| x \|_{L^2(\Omega)} \| y \|_{L^2(\Omega)} \left(  |\Omega| \, \| J \|_{\infty} + 1 \right),
\end{eqnarray*}
we obtain from ${\bf (H_{m})}$
\begin{eqnarray*}
- I_3 & \leq & \frac{\gamma M}{\delta} \int_\Omega |\eta| \left[ |f(x,m(x))| + |K_Jx| \right] du \\
& \leq & \frac{\gamma M}{\delta}  \| \eta \|_{L^2(\Omega)} \left[  k |\Omega|^{1/2}   +  \| x \|_{L^2(\Omega)} \left(  |\Omega| \, \| J \|_{\infty} + 1 \right)  \right].
\end{eqnarray*}

Thus, we get that 
\begin{eqnarray*}
\frac{d}{dt} \left( \frac{1}{2} \int_\Omega \eta^2 \, du\right) &=&  I_1 + I_2 - I_3 \\    
& \leq &  - \frac{\delta}{\varepsilon} \| \eta \|_{L^2(\Omega)}^2 + \frac{1}{2} \left(\frac{\gamma M}{\xi \delta}\right)^2 \| \nabla x \|_{L^2(\Omega)}^2 \\
& & + \frac{\gamma M}{\delta}  \| \eta \|_{L^2(\Omega)} \left[  k |\Omega|^{1/2}   +  \| x \|_{L^2(\Omega)} \left(  |\Omega| \, \| J \|_{\infty} + 1 \right)  \right].
\end{eqnarray*}

Now, it follows from Lemma \ref{UB} that norms 
$\| x \|_{L^2(\Omega)}$ and $\| \nabla x \|_{L^2(\Omega)}$
are uniformly bounded in any bounded interval of time as $[0,T]$.
Thus, there exist positive constants $D_0$ and $D_1$ such that 
\begin{eqnarray*}
\frac{d}{dt} \left( \| \eta \|_{L^2(\Omega)}^2 \right) 
& \leq & - \frac{2 \delta}{\varepsilon} \| \eta \|_{L^2(\Omega)}^2 + D_0 \| \eta \|_{L^2(\Omega)} + D_1 \quad \forall t \in [0,T]. 
\end{eqnarray*}
Consequently, by Young's inequality we get 
\begin{eqnarray*}
\frac{d}{dt} \left( \| \eta \|_{L^2(\Omega)}^2 \right) 
& \leq & - \frac{ \delta}{\varepsilon} \| \eta \|_{L^2(\Omega)}^2 + \frac{\varepsilon}{2 \delta} D_0^2 + D_1. 
\end{eqnarray*}

Hence, if we integrate this inequality in $[0,t]$, we can conclude that   
\begin{equation}  \label{etaeq}
\| \eta \|_{L^2(\Omega)}^2 \leq \frac{\varepsilon}{\delta} \left( \frac{\varepsilon}{2 \delta} D_0^2 
+ D_1 \right) \left(1-e^{-t \delta/\varepsilon}   \right)  +  e^{-t \delta/\varepsilon} \| \eta(0) \|_{L^2(\Omega)}^2  
\end{equation}
for all $t \in [0,T]$.

Let us estimate now the convergence of the functions $x$ given by the system \eqref{eq} and \eqref{siseta} to the solution $X$ of the limit equation \eqref{limit} as $\varepsilon$ goes to zero.
We proceed as before considering  
\begin{eqnarray*}
\frac{d}{dt} \left(  \frac{1}{2} \int_\Omega | x - X |^2 \, du \right)  & = & \int_\Omega (x -X)(x-X)' \, du \\
& = & \int_\Omega (x-X)(f(x,m(x)+\eta) - f(X,m(X)) ) \, du  \\
& & \qquad \qquad + \int_\Omega (x-X)K_J(x-X) \, du. \\
& = & I_1 + I_2.
\end{eqnarray*}

First, we evaluate $I_1$ by 
\begin{eqnarray*}
&& I_1   =   \int_\Omega (x-X)(f(x,m(x)+\eta) - f(X,m(X)) ) \, du \\
& = & \int_\Omega (x-X)(f(x,m(x)+\eta) - f(x,m(x)) + f(x,m(x)) - f(X,m(x)) \\ 
& & \quad \qquad + f(X,m(x)) - f(X,m(X)) ) \, du \\
& = & \int_\Omega (x - X) \left(  \frac{\partial f}{\partial y}(x, b_1) \eta  +   \frac{\partial f}{\partial x}(a_1, m(x))(x-X) + \frac{\partial f}{\partial y}(X, b_2)(m(x) - m(X))       \right) du  \\
& = & \int_\Omega \frac{\partial f}{\partial y}(x, b_1) (x - X)  \eta \, du + \int_\Omega \left( \frac{\partial f}{\partial x}(a_1, m(x)) + \frac{\partial f}{\partial y}(X, b_2)m'(a_2) \right) (x-X)^2  \,  du
\end{eqnarray*}
for some $b_1 \in [m(x),m(x)+\eta]$,  $b_2 \in [m(x),m(X)]$ and $a_1$, $a_2 \in [x, X]$ (without lost of generality).

Thus, by Young's Inequality and conditions ${\bf (H_{fg})}$, we have for an appropriated $\rho > 0$ that 
\begin{eqnarray*}
I_1 & \leq & \left[ \beta N \left( \frac{\gamma M}{\delta}  + \rho^2 \right) - \alpha \right] \int_\Omega (x-X)^2 \, du  + \frac{\beta N}{\rho^2} \int_\Omega \eta^2 \, du.
\end{eqnarray*}

On the other hand, the nonlocal operator satisfies 
\begin{eqnarray*}
I_2 & = & \int_\Omega (x-X) K_J (x-X) \, du \\
& = & \int_\Omega (x-X)(u) \int_\Omega J(u-v) \left( (x-X)(v) - (x-X)(u) \right) dv \, du  \\
& = & - \int_\Omega \int_\Omega J(u-v) \left( (x-X)(v) - (x-X)(u) \right)^2 dv \, du \\
& \leq & 0.
\end{eqnarray*}

Therefore, if we set the constants 
\begin{equation} \label{eqconst}
C_1= \beta N \left( \frac{\gamma M}{\delta}  + \rho^2 \right) - \alpha 
\quad \textrm{ and } \quad 
C_2=\frac{\beta N}{\rho^2}
\end{equation}
we get 
\begin{equation} \label{eqx-X}
\frac{d}{dt} \left(  \int_\Omega | x - X |^2 \, du \right) \leq 2 C_1 \int_\Omega (x-X)^2 \, du  + 2 C_2 \int_\Omega \eta^2 \, du.
\end{equation}
Consequently, we get after some integrations that 
\begin{equation}
\| x - X \|_{L^2(\Omega)}^2 \leq e^{2 C_1 t} \| (x-X)(0) \|^2_{L^2(\Omega)} + 2 C_2 \int_0^t e^{2 C_1(t-s)} \| \eta \|_{L^2(\Omega)}^2 \, ds.
\end{equation}
Thus, due to \eqref{etaeq}, we can conclude that
\begin{eqnarray*}
\| x - X \|_{L^2(\Omega)}^2 & \leq & e^{2 C_1 t} \| (x-X)(0) \|^2_{L^2(\Omega)}   \\ 
& & + 2 \varepsilon C_2 e^{2 C_1 t} \left[  \left( \frac{\varepsilon D_0^2 + 2 \delta D_1}{2 \delta^2} \right) \left( \frac{1 - e^{-2 C_1 t}}{2 C_1} \right) \right. \\
&& \qquad \left. +  \frac{\| \eta(0) \|_{L^2(\Omega)}^2}{2 C_1 \varepsilon + \delta} \left( 1- e^{-\frac{t}{\varepsilon}(2 C_1 \varepsilon + \delta)} \right) \right].
\end{eqnarray*}

Therefore, since we are taking $\varepsilon>0$ small, we obtain 
\begin{eqnarray}  \label{SEST}
\| x - X \|_{L^2(\Omega)}^2 &\leq & e^{2 C_1 t} \| (x-X)(0) \|^2_{L^2(\Omega)} \nonumber \\
& & + 2 \varepsilon C_2 e^{2 C_1 t} \left[  \left( \frac{\varepsilon D_0^2 + 2 \delta D_1}{4 \delta^2 |C_1|} \right) 
+  \frac{\| \eta(0) \|_{L^2(\Omega)}^2}{2 C_1 \varepsilon + \delta} \right].
\end{eqnarray}

As a consequence, we have the following results:

\begin{theorem} \label{propE1}
Let us suppose assumptions ${\bf (H_{fg})}$, ${\bf (H_{\infty})}$, ${\bf (H_{m})}$ and ${\bf (H_{J})}$ with the additional condition $J$ of class $\mathcal{C}^1$. 

Then, for any $T>0$ and initial condition $(x_0, y_0) \in \mathcal{C}^1(\Omega) \times H^1(\Omega)$ satisfying $0 \leq \|x_0\|_{L^\infty(\Omega)} \leq N$ and $0 \leq \|y_0\|_{L^\infty}(\Omega) \leq M$, 
there exist positive constants $\varepsilon_0$, $M_1$ and $M_2$ such that solution $x$ of \eqref{equa} satisfies 
\begin{eqnarray*}
\| x - X \|_{L^2(\Omega)} \leq M_1 \| (x-X)(0) \|_{L^2(\Omega)}  + \varepsilon \, M_2 
\end{eqnarray*}
for all $t \in [0, T]$ and $\varepsilon \in (0,\varepsilon_0)$ where $X$ is the solution of the limit problem \eqref{limit} with initial condition $X(0)$.

In particular, if $x(0) = X(0)$, we have 
$$
\sup_{t \in [0,T]} \| x - X \|_{L^2(\Omega)} \to 0 \qquad \textrm{ as } \varepsilon \to 0.      
$$
\end{theorem}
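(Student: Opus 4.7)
The plan is to introduce the deviation $\eta := y - m(x)$ of the fast variable from the slow manifold $\{y = m(x)\}$ and run two coupled $L^2$-energy estimates: first show that $\|\eta(t)\|_{L^2(\Omega)}^2$ relaxes to an $O(\varepsilon)$ tube on a fast time scale, and then use this to close a Gronwall-type inequality for $\|x(t)-X(t)\|_{L^2(\Omega)}^2$. This is a Tikhonov--Fenichel reduction implemented in Hilbert norms so that it accommodates the Laplacian acting on $y$ and the nonlocal operator $K_J$ acting on $x$ simultaneously.

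First I rewrite the system in the coordinates $(x,\eta)$, obtaining \eqref{siseta}, and differentiate $\tfrac12\|\eta\|_{L^2}^2$. The right-hand side splits as $I_1+I_2-I_3$: in $I_1$ I invoke the mean value theorem together with part (iii) of $(H_{fg})$ to extract the dissipative contribution $-\tfrac{\delta}{\varepsilon}\|\eta\|_{L^2}^2$; in $I_2$ I integrate by parts, observing that the Neumann condition \eqref{bcond} on $y$ kills the boundary term since $\partial_n(m(x)+\eta) = \partial_n y = 0$, apply Young's inequality, and use $|\nabla m(x)|\leq m'(x)|\nabla x|$ together with $(H_m)$; in $I_3$ I use the uniform boundedness of $f$ from (i) of $(H_{fg})$ and a direct Cauchy--Schwarz bound on $K_Jx$. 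Combining these and applying Young once more produces the differential inequality $\frac{d}{dt}\|\eta\|_{L^2}^2 \leq -\frac{\delta}{\varepsilon}\|\eta\|_{L^2}^2 + \frac{\varepsilon}{2\delta}D_0^2 + D_1$, which integrates to \eqref{etaeq}.

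Next I differentiate $\tfrac12\|x-X\|_{L^2}^2$, add and subtract $f(x,m(x))$ and $f(X,m(x))$ inside the integrand, and use Taylor expansions together with $(H_{fg})$ and $(H_m)$ to separate a linear-in-$(x-X)$ contribution from a cross term in $\eta$. The cross term is tamed by Young's inequality with a free parameter $\rho$, producing the constants $C_1,C_2$ of \eqref{eqconst}; the nonlocal term $\int_\Omega (x-X)K_J(x-X)\,du$ is nonpositive after symmetrizing the double integral via $J(u-v)=J(v-u)$, so it can simply be dropped. This yields \eqref{eqx-X}; Gronwall combined with the bound \eqref{etaeq} produces \eqref{SEST}. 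Taking square roots with $\sqrt{a+b}\leq\sqrt{a}+\sqrt{b}$, bounding $e^{2C_1 t}\leq e^{2|C_1|T}$ on $[0,T]$, and choosing $\varepsilon_0$ so that $2C_1\varepsilon_0+\delta>0$ delivers the announced estimate; the special case $x(0)=X(0)$ is then immediate.

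The main technical obstacle is the uniform $H^1$-bound on $x$ over $[0,T]$ needed to control $I_2$: since $K_J$ has no smoothing effect, $\|\nabla x\|_{L^2}$ cannot be recovered from the equation for $x$ but must be \emph{propagated} from the initial datum. This is exactly why the statement requires $x_0 \in \mathcal{C}^1(\Omega)$ and $J \in \mathcal{C}^1$: differentiating $K_Jx$ under the integral sign gives $\nabla K_Jx(u) = \int_\Omega \nabla J(u-v)(x(v)-x(u))\,dv - \nabla x(u)\int_\Omega J(u-v)\,dv$, which is controlled in $L^2$ whenever $\nabla J$ is bounded on the compact set $\overline{\Omega-\Omega}$. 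Plugging this back into the $x$-equation and applying Gronwall to $\|\nabla x\|_{L^2}^2$ then yields the uniform bound abstracted in Lemma \ref{UB}; the $L^\infty$-bounds of Remark \ref{Linfty} simultaneously keep $(x,y)$ inside the invariant rectangle $\mathcal{R}$ throughout, which is what makes all the constants in Steps 1--2 independent of $\varepsilon$.
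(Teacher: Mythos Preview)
Your proposal is correct and mirrors the paper's approach exactly: the paper carries out the same two-step energy estimate (the $\eta$-decay leading to \eqref{etaeq} and the Gronwall argument for $\|x-X\|_{L^2}^2$ leading to \eqref{SEST}, with Lemma~\ref{UB} supplying the needed $H^1$-bound on $x$), and its formal proof of the theorem is the single line ``It is a direct consequence of estimate \eqref{SEST}.'' One small point that both you and the paper pass over: taking square roots in \eqref{SEST} actually produces an $O(\sqrt{\varepsilon})$ remainder rather than the $O(\varepsilon)$ written in the theorem statement, though this is cosmetic and does not affect the convergence conclusion.
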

\begin{proof}
It is a direct consequence of estimate \eqref{SEST}.
\end{proof}

\begin{remark}
Since the constants $D_0$ and $D_1$ given by Lemma \ref{UB} depend on $T$, we can not guarantee convergence at $(0,+\infty)$. 
\end{remark}

\begin{cor} \label{decay}
Under the conditions of Theorem \ref{propE1} and the additional assumption 
\begin{equation} \label{acond}
\frac{\alpha}{\beta N} - \frac{\gamma M}{\delta} > 0,
\end{equation}
there exist positive constants $\varepsilon_0$, $m$ and $M_1$ such that
\begin{equation} \label{estadd}
\| x - X \|_{L^2(\Omega)}^2 \leq e^{-m t} \left( \| (x-X)(0) \|^2_{L^2(\Omega)}  + \varepsilon \, M_2 \right) 
\end{equation}
for all $t \in [0, T]$ and $\varepsilon \in (0,\varepsilon_0)$.

In particular, if $x(0) = X(0)$, we have 
$$
\sup_{t \in [0,T]} \| x - X \|_{L^2(\Omega)} \to 0 \qquad \textrm{ as } \varepsilon \to 0.      
$$  
\end{cor}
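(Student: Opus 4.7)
The key idea is to exploit the free parameter $\rho>0$ that appears in the expressions \eqref{eqconst}: it was introduced via Young's inequality in the estimate of $I_1$ and was never pinned down, so it is still available. The additional hypothesis \eqref{acond} is exactly the slack needed: it reads $\alpha > \beta N \,\gamma M / \delta$, which allows a choice of $\rho > 0$ small enough that
$$
C_1 \;=\; \beta N\!\left(\tfrac{\gamma M}{\delta} + \rho^2\right) - \alpha \;<\; 0.
$$
I would fix such a $\rho$ and then set $m := -2 C_1 > 0$, so that the factor $e^{2 C_1 t}$ in front of the two terms of \eqref{SEST} becomes precisely the decaying factor $e^{-m t}$ demanded by \eqref{estadd}.

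Next I would choose $\varepsilon_0 > 0$ small enough (depending only on $|C_1|$ and $\delta$) so that $2 C_1 \varepsilon + \delta \geq \delta/2 > 0$ for all $\varepsilon \in (0,\varepsilon_0)$. This ensures that the denominator $2 C_1 \varepsilon + \delta$ appearing in the bracket of \eqref{SEST} is bounded away from zero uniformly in $\varepsilon$, and hence the whole bracket is controlled by a finite constant $K$ depending on $\delta$, $|C_1|$, $D_0$, $D_1$, and $\|\eta(0)\|_{L^2(\Omega)}$, but independent of $\varepsilon$ and $t$. Plugging this back into \eqref{SEST} and using $e^{2 C_1 t} = e^{-m t}$ yields
$$
\| x - X \|_{L^2(\Omega)}^2 \;\leq\; e^{-m t}\| (x-X)(0)\|_{L^2(\Omega)}^2 \,+\, 2\varepsilon\, C_2\, e^{-m t}\, K
\;=\; e^{-m t}\Bigl( \|(x-X)(0)\|_{L^2(\Omega)}^2 + \varepsilon\, M_2 \Bigr),
$$
where $M_2 := 2 C_2 K$. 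This is precisely \eqref{estadd}.

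The "in particular" statement is immediate: taking $x(0) = X(0)$ kills the first term, leaving $\|x-X\|_{L^2(\Omega)}^2 \leq \varepsilon M_2 e^{-m t} \leq \varepsilon M_2$, which tends to zero uniformly on $[0,T]$ as $\varepsilon \to 0$. There is no real obstacle: the proof is essentially a rereading of the estimate \eqref{SEST} already obtained in Theorem~\ref{propE1}. The only point requiring insight is the design-check that hypothesis \eqref{acond} was tailored precisely so that the free Young parameter $\rho$ can be tuned to flip the sign of $C_1$; once this is observed, everything else is direct substitution.
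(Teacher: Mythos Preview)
Your proposal is correct and follows exactly the same approach as the paper: the paper's proof consists of just two sentences, observing that condition \eqref{acond} permits a choice of $\rho$ small enough to make $C_1<0$ in \eqref{eqconst}, whence \eqref{estadd} follows from \eqref{SEST}. You have simply fleshed out the details (fixing $m=-2C_1$, controlling the denominator $2C_1\varepsilon+\delta$ via a choice of $\varepsilon_0$, and packaging the bracket as $M_2$), all of which is implicit in the paper's terse argument.
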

\begin{proof}
Due to the additional condition \eqref{acond}, we can choose $\rho$ small enough in order to set $C_1<0$ in expression \eqref{eqconst}. Hence, we obtain estimate \eqref{estadd} from inequality \eqref{SEST} concluding the proof.
\end{proof}

\begin{remark}
Notice that, due to Remark \ref{0stable}, the condition \eqref{acond} implies that zero is the globally stable equilibrium for the non negative solutions of the limit equation \eqref{limit}. Thus, we are just giving an order of decaying at Corollary \ref{decay}.  
\end{remark}

\section{Application} \label{appl}

Coming back to (\ref{SIRUV}),(\ref{Neuman}) it is easy to check that Theorem~\ref{propE1} and Corollary~\ref{decay} can be applied. So, we have uniform convergence in finite intervals of time. However, both systems, namely (\ref{SIRUV}),(\ref{Neuman}) and (\ref{limin}) have the same equilibria which are always constant in space, the disease free (i=j=0) and an non-zero equilibria which will be the endemic equilibrium (for $R_0>1$, the basic reproduction number).
In this later case, the constant endemic equilibrium  is locally stable, that can be check through linealization, similar to the ODE case.

Furthermore, since we have comparison, we always can compare the constant in space solutions (which will satisfy the ODE system) with the solutions of (\ref{SIRUV}),(\ref{Neuman}). But the ODE system have an equivalent singular perturbation result, with the difference that the convergence can be done, globally in time. Therefore, we can extend this results for all times.

\section{Appendix} 

Here we show that the solutions $(x,y)$ of \eqref{eq} are uniformly bounded in $H^1(\Omega)$ for any finite interval of time.
Notice that the boundedness in $L^\infty(\Omega)$ follows from Remark \ref{Linfty}, and then, $x$ and $y$ are uniformly bounded in $L^2(\Omega)$.
It remains us to estimate $\nabla x$ and $\nabla y$ in $L^2(\Omega)$.

In order to estimate $\|\nabla y\|_{L^2(\Omega)}$, we first perform the change of variable $t=\epsilon \tau$ in \eqref{equa} obtaining 
$$
\dot w = g(z,w) + \epsilon \Delta w  \textrm{ in } \Omega \quad \textrm{ with } \quad \frac{\partial w}{\partial N}=0 \textrm{ on } \partial \Omega
$$
where $w(\tau) = y(\tau \epsilon)$ and $z(\tau) = x(\tau \epsilon)$.

Hence, if we define the norm $\| w \|_s = \| (\epsilon \Delta + I)^s w \|_{L^2(\Omega)}$ for any $s \geq 0$ and $\epsilon>0$ we get from \cite[Theorem 1.4.3]{Henry} that
$$
\| e^{(\epsilon \Delta - I) \tau} \|_s \leq M e^{-\tau} \tau^{-s} \quad \textrm{ wherever }\tau > 0 \textrm{ and } 0< s \leq 1,
$$
since the first eigenvalue of $\epsilon \Delta - I$ is equal to $1$ for any $\epsilon>0$. Thus, due to \cite[Theorem 3.3.6]{Henry} and assumptions on nonlinearity $g$, we have for any $0<s<r\leq1$ that 
$\| w \|_r$ is uniformly bounded for $\tau>1$ and any $\epsilon>0$. Therefore, we get that 
$\| y(t) \|_r = \| w(t/\epsilon)\|_r$ is uniformly bounded wherever $t>\epsilon$.  As we are taking $\epsilon \to 0$, we can conclude that $\| y \|_{H^1(\Omega)}$ is uniformly bounded for any $t>1$ and $\epsilon \in (0,1)$.

Now, let us estimate $\|\nabla x\|_{L^2(\Omega)}$.  We use constant variation formula at the first equation of \eqref{eq} getting 
$$
x(t) = e^{-At}x_0 + \int_0^t e^{-A(t-s)} \left( f(x,y) + \int_\Omega J(u-v) x(s) \, dv \right) ds \quad \textrm{ in } \Omega
$$
with $A(u) = \int_\Omega J(u-v) \, dv$ for $u \in \Omega$.
Hence, under the conditions $x_0$ and $J$ of class $\mathcal{C}^1$, we get 
\begin{eqnarray*}
\partial_i x(t) & = & e^{-At} \left( \partial_i x_0 - \partial_i A \, x_0 \right) \\ 
&& - \partial_i A \int_0^t (t-s) e^{-A(t-s)} \left(   f(x,y) + \int_\Omega J(u-v)x(s) \, dv   \right) ds \\
& & + \int_0^t e^{-A(t-s)} \left( \nabla f \cdot (\partial_i x,\partial_i y) + \int_\Omega \partial_i J(u-v) x(s) \, dv \right) ds 
\end{eqnarray*}
where $\partial_i$ denotes the $i$-th partial derivative for $i=1, 2, ..., n$.

Thus, since $m=\min_{u \in \Omega} A(u) > 0$ with $x$ and $y$ uniformly bonded in $L^\infty(\Omega)$, we obtain that there exist positive constants $C_j$ such that
\begin{eqnarray*}
e^{2mt} | \partial_i x |^2 & \leq & C_0 + C_1 t \int_0^t (t-s)^2 e^{2ms} ds + C_2 t \int_0^t e^{2ms} ds \\
& & \qquad + C_3 t \int_0^t e^{2ms} |\partial_i y|^2 ds + C_4 t \int_0^t e^{2ms} |\partial_i x|^2 ds.
\end{eqnarray*}
Hence, as $\| y \|_{H^1(\Omega)}$ is uniformly bounded, we obtain for all $t \geq 1$ that 
\begin{eqnarray*}
e^{2mt} \| \partial_i x \|^2_{L^2(\Omega)} & \leq & \tilde C_0 + \tilde C_1 t \left[ \int_0^t (t-s)^2 e^{2ms} ds \right. \\
& & \left. + \int_0^t e^{2ms} ds  +  \int_0^t e^{2ms} \|\partial_i x\|^2_{L^2(\Omega)} ds \right]
\end{eqnarray*}
for some positive constants $\tilde C_0$ and $\tilde C_1$. From Gronwall inequality, we conclude $\| \partial_i x \|^2_{L^2(\Omega)}$ is uniformly bounded in $[0,T]$ which leads us to the following result.

\begin{lemma} \label{UB}
Under assumptions ${\bf (H_{fg})}$ and ${\bf (H_\infty)}$ with $J$ of class $\mathcal{C}^1$ satisfying ${\bf (H_J)}$, we have that, for any given $T>0$, there exists $M>0$ such that 
the solutions $(x,y)$ of \eqref{equa} with initial conditions $x_0 \in \mathcal{C}^1$ and $y_0 \in L^\infty(\Omega) \cap H^1(\Omega)$ satisfy
$$
\| j \|_{H^1(\Omega)} \leq M \quad \textrm{ for all } t \in [0,T]
$$
where $j=x$ or $y$.
\end{lemma}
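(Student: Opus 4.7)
The plan is to separate the estimate into the $L^\infty$ bound, which is immediate from Remark~\ref{Linfty}, and the two gradient estimates $\|\nabla y\|_{L^2(\Omega)}$ and $\|\nabla x\|_{L^2(\Omega)}$, handled by different methods because the $y$-equation enjoys Laplacian smoothing while the $x$-equation is essentially an ODE driven by a nonlocal forcing with no intrinsic regularization.

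For $\nabla y$, I would first rescale time by $\tau = t/\varepsilon$ so that the singular factor disappears and the second equation becomes a standard semilinear heat equation $\dot w = g(z,w) + \varepsilon \Delta w$ with a uniformly bounded nonlinearity by ${\bf (H_{fg})}$(i). Writing this in mild form with respect to the analytic semigroup generated by $\varepsilon \Delta - I$, one applies the fractional-power estimates of \cite[Theorems 1.4.3 and 3.3.6]{Henry}, using that the first eigenvalue of $\varepsilon\Delta-I$ equals $1$ uniformly in $\varepsilon$, to conclude that $\|w(\tau)\|_r$ is uniformly bounded in $\varepsilon$ for $\tau \geq 1$ and any $0 < s < r \leq 1$. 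Pulling back to the original time variable yields a uniform $H^1$ bound on $y(t)$ for $t \geq \varepsilon$, and the short-time interval is covered by a direct energy estimate using the $H^1$ regularity of the initial datum $y_0$.

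For $\nabla x$, I would rewrite $K_J x = J*x - A(u)x$ with $A(u) = \int_\Omega J(u-v)\,dv$, which is continuous and, by ${\bf (H_J)}$ together with $J(0)>0$ and compactness of $\bar\Omega$, bounded below by some $m > 0$. The first equation then reads $\dot x + A(u)x = f(x,y) + J*x$, and variation of constants gives an explicit representation formula for $x(t)$. Differentiating this formula in the spatial variable is legitimate under the hypotheses $x_0, J \in \mathcal{C}^1$: the derivative passes onto $J$ inside the convolution and onto $x_0$ and $A$ in the remaining terms. The resulting expression for $\partial_i x$ contains $\partial_i y$ through $\nabla f \cdot (\partial_i x, \partial_i y)$, which I would control using the $H^1$ bound on $y$ just established; multiplying through by $e^{2mt}$ to exploit the exponential damping from $A$ and applying Gronwall's inequality to the remaining self-term in $\partial_i x$ closes the argument on $[0,T]$.

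The main obstacle is the coupling between the two estimates: because the nonlocal equation for $x$ does not regularize, the gradient of $x$ must be produced by hand from the Duhamel formula using the $\mathcal{C}^1$ regularity of $J$ and $x_0$, and this step can only be closed via Gronwall once the bound for $\partial_i y$ is already available. A secondary technical point is that the fractional-power estimate only delivers control for rescaled time $\tau \geq 1$, so a separate short-time argument based on $y_0 \in H^1(\Omega)$ is needed to cover the full interval $[0,T]$ as the statement requires.
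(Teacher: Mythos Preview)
Your proposal is correct and follows essentially the same route as the paper: the $L^\infty$ bound from Remark~\ref{Linfty}, the time rescaling $\tau=t/\varepsilon$ together with \cite[Theorems~1.4.3 and~3.3.6]{Henry} for $\|\nabla y\|_{L^2}$, and the variation-of-constants representation of $x$ differentiated in space (using $J,x_0\in\mathcal{C}^1$), followed by Gronwall, for $\|\nabla x\|_{L^2}$. Your explicit mention of a separate short-time energy argument for $y$ on $[0,\varepsilon]$ using $y_0\in H^1(\Omega)$ in fact patches a point the paper leaves implicit, since the semigroup estimate only yields control for $\tau\geq 1$.
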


\vspace{0.7 cm}

{\bf Acknowledgements.} 

\noindent This work does not have any conflicts of interest. The second author (MCP) is partially supported by CNPq 303253/2017-7 and FAPESP 2017/02630-2 and 2019/06221-5 (Brazil).




\end{document}